\newtheorem{theorem}{Theorem}
\newtheorem{lemma}[theorem]{Lemma}
\newtheorem{proposition}[theorem]{Proposition}
\renewcommand{\pod}[1]{\allowbreak\mathchoice
  {\if@display \mkern 18mu\else \mkern 8mu\fi (#1)}
  {\if@display \mkern 18mu\else \mkern 8mu\fi (#1)}
  {\mkern4mu(#1)}
  {\mkern4mu(#1)}
}
\begin{document}
\title{Many solutions to the $S$-unit equation $a+1=c$}
\author{Junsoo Ha and Kannan Soundararajan}
\address{Department of Mathematics, Incheon National University, Incheon, Republic
of Korea}
\email{junsoo.ha.31@gmail.com}
\address{Department of Mathematics, Stanford University, Stanford, CA 94305,
USA}
\email{ksound@stanford.edu}
\thanks{The second author is partially supported by a grant from the National Science Foundation, and 
a Simons Investigator grant from the Simons Foundation.}  
\begin{abstract}
We show that there are arbitrarily large sets 
$S$ of $s$ primes for which the number of solutions to $a+1=c$
where all prime factors of $ac$ lie in $S$ has $\gg \exp( s^{1/4}/\log s)$
solutions.

\end{abstract}

\maketitle

\section{Introduction}

Given a  finite set $S$ of primes, the binary $S$-unit equation concerns solutions to 
$u+v=1$ where $u$ and $v$ are $S$-units; that is, $u$ and $v$ are rational numbers 
whose numerator and denominator are composed only of primes in $S$.   This $S$-unit equation 
has been extensively investigated, and we refer to \cite{EGSTbook} for a detailed overview of this 
equation and its generalizations.    In particular, Evertse \cite{Evertse1984} has shown that the binary $S$-unit 
equation has at most $3 \times 7^{2s+1}$ solutions, where $s$ denotes the cardinality of the set $S$.    
This refines classical work of Siegel which established the finiteness of the number of solutions to the $S$--unit equation.  

While there are many naturally occurring $S$--unit equations that have 
very few solutions (see \cite{EGST1988} for many examples admitting at most two solutions), it 
is possible to exhibit arbitrarily large sets $S$ for which the equation $u+v=1$ has lots of solutions.  
In this context,  Erd{\H o}s, Stewart and Tijdeman \cite{EST1988} showed that
there are arbitrarily large sets $S$ for which the equation has at
least $\exp((4-\epsilon)(s/\log s)^{1/2})$ solutions.   This was subsequently refined by Konyagin and Soundararajan 
\cite{KS2007} who showed that there are sets $S$ for which the $S$--unit equation has at least $\exp(s^{2-\sqrt{2}-\epsilon})$ 
solutions.   The sets $S$ used in these constructions are special and comprise of the set of initial primes, together with a 
small number of primes that appear in the argument, and which are out of our control.  But even for the set $S$ comprising of the 
first $s$ primes, it is expected that the $S$ unit equation has $\exp(s^{2/3-\epsilon})$ solutions, and that perhaps the general $S$ 
unit equation does not have more than $\exp(s^{2/3+\epsilon})$ solutions (see \cite{EST1988} for a heuristic discussion).  
In the context of $S$ being the first $s$ primes (which is related to the distribution of smooth numbers), Lagarias and Soundararajan 
\cite{LS2012} showed that under the Generalized Riemann Hypothesis one has at least $\exp(s^{1/8-\epsilon})$ solutions, and Harper \cite{Harper2014} has 
shown unconditionally that there are at least $\exp(s^{\delta})$ solutions for some $\delta>0$.   Ha \cite{Ha} has studied the analogous problem 
over function fields, obtaining unconditionally $\gg \exp(s^{1/6-\epsilon})$ solutions.

Rewrite the $S$-unit equation $u+v=1$ as $a+b=c$ where $a$, $b$ and $c$ are coprime positive integers with 
all prime factors of $abc$ lying in the set $S$.   In this setting, we may consider the special case when $b=1$, where 
we are seeking two consecutive natural numbers $a$ and $c$ with all their prime factors lying in $S$.  Konyagin and Soundararajan 
\cite{KS2007} showed that this special case too has exponentially many solutions for certain well chosen sets $S$.   Namely, they showed that 
there are sets $S$ for which the equation $a+1=c$ has at least $\exp(s^{1/16})$ solutions.   This was subsequently improved by Harper \cite{Harper2011a} 
who showed the existence of sets $S$ for which there are at least $\exp(s^{1/6-\epsilon})$ solutions.  In this paper we make further progress on this 
question, by showing that there are sets $S$ with at least $\exp(s^{1/4}/\log s)$ solutions.

 \begin{theorem}
\label{thm:first} For all $s$, there exist sets $S$ of $s$
primes such that the equation 
\[
a+1=c
\]
 has $\gg \exp(s^{1/4}/\log s)$ solutions where
all prime factors of $ac$ lie in $S$. 
\end{theorem}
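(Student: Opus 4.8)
The plan is to fix a smoothness bound $y$ and to take $S$ to consist of all primes up to $y$ together with a collection $q_1,\dots,q_r$ of larger primes, chosen as a function of $y$ so as to make a suitable family of solutions as large as possible; one arranges that $\pi(y)$ dominates, so that with $s=|S|=\pi(y)+r$ one has $s\asymp y/\log y$. The solutions will be consecutive pairs $a=n$, $c=n+1$ with $n\le X$ such that both $n$ and $n+1$ are a $y$-smooth number times a (necessarily bounded) product of powers of the $q_i$; any such $n$ automatically has all prime factors of $n(n+1)$ in $S$. The essential difficulty is that, in contrast with the three-term equation $a+b=c$, where Erd\H{o}s--Stewart--Tijdeman obtain many solutions almost for free by observing that a product set of smooth numbers has few distinct sums and then pigeonholing on the sum, the equation $a+1=c$ cannot be rescaled and offers no such collision; one must replace the free pigeonhole by a genuine -- and, with present technology, much weaker -- lower bound for the number of consecutive almost-smooth integers, which is why the exponent one obtains is so far below both the conjectured truth and the bound known for $a+b=c$.

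The first step is to write the size of the family as a sum. Grouping the relevant $n$ according to which $q_i$ divide $n$ and which divide $n+1$, and writing the $y$-smooth parts of $n$ and $n+1$ as $k$ and $k'$, the count becomes a sum over (bounded) pairs $(P,P')$ of coprime products of the $q_i$ of the number of pairs of $y$-smooth integers $k,k'$ with $kP+1=k'P'$; since $P$ and $P'$ are coprime the pair $(k,k')$ runs over a one-parameter family $k=k_0+P't$, $k'=k_0'+Pt$, so this is the number of $t$ for which the two fixed linear forms $k_0+P't$ and $k_0'+Pt$ are simultaneously $y$-smooth. The term $P=P'=1$ is precisely the count of $n\le X$ with $n$ and $n+1$ both $y$-smooth, and one gains over this by letting $n$ and/or $n+1$ absorb some of the special primes; allowing both $n$ and $n+1$ to be genuinely rough, in a symmetric fashion, is what should yield the improvement over the previous bound $\exp(s^{1/6-\epsilon})$.

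The heart of the matter, and the step I expect to be the main obstacle, is a usable lower bound for the number of $t$ in a range for which two prescribed linear forms are simultaneously $y$-smooth -- equivalently, for the number of $y$-smooth values in a fixed arithmetic progression subject to a companion smoothness condition. Crucially one cannot simply invoke the (unavailable, conjectural) assertion that a positive proportion $\rho(u)$ of the members of such a progression are $y$-smooth. Instead I would argue on average over the special primes: one shows that the expected number of $y$-smooth solutions of $qk\pm1=q'k'$, with $q,q'$ ranging over a dyadic block of primes and $k,k'$ over $y$-smooth integers of appropriate size, is large, and deduces that some admissible choice of $q_1,\dots,q_r$ does at least as well. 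Proving this average is a smooth-numbers-in-progressions, or dispersion, estimate: open the smoothness conditions, use a large sieve or a second moment to sever the correlation between ``$k$ smooth'' and ``$qk\pm1$ smooth'', and bound the resulting character and divisor sums. Making this efficient -- carrying two simultaneous smoothness conditions while leaving two of the special primes free -- is where the exponent $1/4$, rather than Harper's $1/6$, is won, and where essentially all the work lies.

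Finally, writing $u=\log X/\log y$, the family is heuristically of size $X\rho(u)^2$ times a gain factor produced by the $r$ special primes; what is actually provable through the averaged estimate above is a smaller explicit quantity, and one optimizes it over $X$, $y$, $r$ and the location of the dyadic block, subject to $s\asymp y/\log y$ and $r=o(s)$. Extracting the provable bound where it is strongest and translating $y$ back into $s$ via $s\asymp y/\log y$, the number of solutions comes out as $\gg\exp(s^{1/4}/\log s)$, which is Theorem~\ref{thm:first}. The remaining points -- that the chosen $r$ really is $o(s)$, and that the dyadic block of special primes is narrow enough not to inflate $s$ -- are routine to check once the averaged estimate is in hand.
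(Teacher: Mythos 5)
Your framework — fix a smoothness bound $y$, enlarge $S$ with a few auxiliary primes, and produce consecutive pairs whose rough parts are absorbed by those auxiliary primes — has the right general shape, but the execution you sketch misses the single decisive idea of the paper, and the substitute you propose for it is a much harder problem that the argument is specifically designed to avoid.

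You assert that for $a+1=c$ ``the free pigeonhole'' of the Erd\H{o}s--Stewart--Tijdeman type ``offers no such collision,'' so that one is forced to prove a genuine lower bound for simultaneously $y$-smooth values of two linear forms, averaged over the auxiliary primes. This is exactly where the paper diverges, and where you would get stuck. The paper counts solutions to the congruence
\[
p_1\cdots p_k \equiv 1 \pmod{q_1\cdots q_\ell},
\]
with the $p_i$ in $(y/2,y]$ and the $q_j$ in $(y/4,y/2]$, via the large sieve (Proposition~\ref{prop2.1}); this is essentially a \emph{single} smooth-numbers-in-progressions count and is tractable. It then observes that for each such solution the quotient $u=(r-1)/q$ is an integer below $4^\ell y^{k-\ell}$, and pigeonholes \emph{on this quotient} to find a single popular value $u_0$ that arises in many solutions $r=1+qu_0$. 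The prime factors of $u_0$ — whatever they are, with no smoothness requirement at all — are then simply adjoined to $S$; since $u_0$ has at most $O(\log u_0/\log\log u_0)$ distinct prime factors this barely changes $|S|$. So there is a free collision after all: not on a sum, but on a quotient, and it is precisely this that lets one dodge any simultaneous-smoothness estimate.

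By contrast, your plan is to \emph{pre-select} the auxiliary primes $q_1,\dots,q_r$ and then count $n$ with $n=kP$, $n+1=k'P'$, both $k,k'$ $y$-smooth — that is, a two-sided smoothness condition. Even on average over the choice of auxiliary primes, this is a harder counting problem than anything in the paper, and you give no mechanism (large sieve, dispersion, or otherwise) that actually yields the needed lower bound; indeed Harper's $\exp(s^{1/6-\epsilon})$ was obtained along more or less these lines, and the reason the present paper improves on it is that it abandons the requirement that the rough part of $a$ be built from pre-chosen primes and instead lets that rough part be the arbitrary pigeonholed integer $u_0$. A related, smaller miss: the paper does not take $S$ to be all primes up to $y$, but rather the primes in the dyadic window $(y/4,y]$, and the $q_j,p_i$ are drawn from sub-windows of it; this is what makes the large-sieve step (Lemma~\ref{lem:large.sieve} applied via \eqref{3.7}--\eqref{3.8}) efficient, because the moduli $q_1\cdots q_\ell$ and the products $p_1\cdots p_k$ then live at well-controlled scales $\asymp y^\ell$ and $\asymp y^k$.

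In short, the gap is conceptual rather than computational: you need the pigeonhole on the quotient $u_0=(r-1)/q$ together with the observation that $u_0$'s prime factors can be added to $S$ for free; without it, the route through simultaneous smoothness of two linear forms is not known to deliver the exponent $1/4$.
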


For the equation $a+1= c$, we do not know any upper bound on the number of solutions better than Evertse's bound for 
the more general equation $a+b=c$.   One may also ask for analogues of the results of Lagarias and Soundararajan, and Harper, 
where $S$ is taken to be the set of first $s$ primes.  This remains unknown, but heuristic considerations (as in \cite{LS2012} and \cite{EST1988}) 
suggest that when $S$ is the set of first $s$ primes there are $\exp(s^{1/2-\epsilon})$ solutions to the equation $a+1=c$, and that for general sets $S$ 
the equation has no more than $\exp(s^{1/2+\epsilon})$ solutions.  
 
\section{Deducing Theorem 1 from the main proposition}

In this section we enunciate the main technical result of the paper, from which we shall 
deduce Theorem 1.    Let $y$ be large, and let $\ell \le k$ be two integer parameters.  
Our goal is to evaluate asymptotically 
\begin{equation} 
\label{2.1} 
{\mathcal N}(y;k, \ell) = \# \{ p_1 \cdots p_k \equiv 1 \pmod{q_1\cdots q_{\ell}} \}, 
\end{equation}
where the $p_i$ run over all primes in the interval $(y/2,y]$ and the $q_j$ run over all primes in the interval $(y/4,y/2]$.   
For brevity, we write
\begin{equation} 
\label{2.2} 
\lambda=\sum_{y/4 <q \le y/2}\frac{1}{q}\sim\frac{\log2}{\log y}, 
\end{equation} 
and
\begin{equation} 
\label{2.3} 
P=\sum_{y/2 < p \le y} 1 \sim\frac{y}{2\log y}. 
\end{equation} 
We have in mind ranges where $k$ and $\ell$ grow with $y$, and in the estimates 
below all implied constants will be absolute.  

\begin{proposition}
\label{prop2.1}   Let $y\geq 10$ be a real number, and let $\ell$, $k$ be integers
with $1 \leq \ell \leq k\leq y^{1/3}/(\log y)^{2}$. In the range $\ell \le k/2$ we have 
\[
{\mathcal N}(y;k,\ell) =\lambda^{\ell}P^{k}\Big(1+O\Big(\frac{1}{\log y}\Big)\Big).  
\]
In the range $k/4\leq \ell \leq k/2$, we have 
\[
{\mathcal N}(y;k,\ell) =\lambda^{\ell}P^{k}\Big(1+O\Big(\frac{1}{\log y}\Big)\Big)+O\Big( \ell^{k-\ell} (4\lambda P)^\ell y^{k/2} \Big).  
\]
\end{proposition}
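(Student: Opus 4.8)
The plan is to detect the congruence $p_1\cdots p_k\equiv1\pmod{q_1\cdots q_\ell}$ by Dirichlet characters, take the main term from the principal character, and estimate what remains. Since the $p_i$ lie in $(y/2,y]$ and the $q_j$ in $(y/4,y/2]$ these ranges are disjoint, so $(p_1\cdots p_k,\,q_1\cdots q_\ell)=1$ automatically, and writing $m=q_1\cdots q_\ell$ orthogonality of the characters modulo $m$ gives
\[
\mathcal N(y;k,\ell)=\sum_{q_1,\dots,q_\ell}\frac1{\phi(m)}\sum_{\chi\bmod m}\Bigl(\sum_{y/2<p\le y}\chi(p)\Bigr)^{\!k},
\]
the outer sum being over all ordered $\ell$-tuples of primes in $(y/4,y/2]$ (if the $q_j$ repeat, $m$ is not squarefree but the identity still holds). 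The principal character $\chi_0\bmod m$ satisfies $\sum_p\chi_0(p)=P$, so its total contribution is $P^k\sum_{q_1,\dots,q_\ell}\phi(m)^{-1}$; isolating the tuples with distinct $q_j$ (where $\phi(m)=\prod_j(q_j-1)$), using $1/(q-1)=1/q+O(q^{-2})$, and invoking $\lambda\asymp(\log y)^{-1}$ and $k\le y^{1/3}/(\log y)^2$ to check that the various corrections (repeated $q_j$; the error above; the effect of imposing distinctness) are all $O(y^{-1/3}\log y)$, this equals $\lambda^\ell P^k\bigl(1+O(1/\log y)\bigr)$.

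The substance is the bound for the non-principal contribution, and here $\ell\le k/2$ is essential. Assume $k$ even (for odd $k$ first use $|\sum_p\chi(p)|^k\le P\,|\sum_p\chi(p)|^{k-1}$); then $\sum_{\chi\bmod m}|\sum_p\chi(p)|^k=\phi(m)\,\#\{(\mathbf u,\mathbf v):\prod_i u_i\equiv\prod_i v_i\bmod m\}$, where $\mathbf u,\mathbf v$ run over $(k/2)$-tuples of primes in $(y/2,y]$. Summing the non-principal part over $q_1,\dots,q_\ell$ and interchanging, one must count ordered $\ell$-tuples of primes $q_j\in(y/4,y/2]$ with $q_1\cdots q_\ell\mid\prod_i u_i-\prod_i v_i$.

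Split this according to whether $\prod_i u_i=\prod_i v_i$. If so, $\mathbf v$ is a rearrangement of $\mathbf u$, every $q$-tuple is admissible, each $\mathbf u$ has at most $(k/2)!$ rearrangements, and $\sum_{q_1,\dots,q_\ell}(q_1\cdots q_\ell)^{-1}=\lambda^\ell$, so this diagonal part is $\ll(k/2)!\,\lambda^\ell P^{k/2}y^{k/2}$; since $\ell^{\,k-\ell}$ dominates $(k/2)!$ for $k/4\le\ell\le k/2$, it is absorbed into $O\bigl(\ell^{\,k-\ell}(4\lambda P)^\ell y^{k/2}\bigr)$, the dominant error in that range. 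If $\prod_i u_i\ne\prod_i v_i$, the difference $D$ is a nonzero integer with $|D|<y^{k/2}$, while $q_1\cdots q_\ell>(y/4)^\ell$; the inequality $(y/4)^\ell<y^{k/2}$ forces $\ell\le k/2$ (up to $O(k/\log y)$), for each admissible modulus $m\mid D$ one has $|D|/m<4^\ell y^{k/2-\ell}$ so there are few admissible $q$-tuples, and summing against $1/m$ keeps this off-diagonal part of the same size. To get the sharper error $O(\lambda^\ell P^k/\log y)$ throughout $\ell\le k/2$ one must avoid the triangle inequality at the start and instead keep the bilinear structure $\prod_i p_i=\bigl(\prod_{i\le k/2}p_i\bigr)\bigl(\prod_{i>k/2}p_i\bigr)$, each factor of size $\ge y^{k/2}\ge m$, running a dispersion/Cauchy--Schwarz argument modulo $m$ whose off-diagonal term is exactly the even moment just treated; the hypothesis $k\le y^{1/3}/(\log y)^2$ then yields a saving of a power of $\log y$ per prime that beats $(k/2)!$.

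The main term is routine; the obstacle is the non-principal part. For a single modulus $m\asymp y^\ell$ there is no usable pointwise bound for $\sum_p\chi(p)$, so all cancellation must come from averaging over the $q_j$ or from the bilinear decomposition, and the delicate points are (a) extracting that cancellation without loss and (b) controlling the collisions $\prod_i u_i=\prod_i v_i$, which proliferate once $k$ is large, so that they stay under the stated error terms. It is precisely the comparison of the modulus $m\asymp y^\ell$ with the square root $y^{k/2}$ of the typical size of $\prod_i p_i$ --- that is, the condition $\ell\le k/2$ --- that makes this possible.
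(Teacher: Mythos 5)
Your main-term computation is fine, but the treatment of the non-principal characters has genuine gaps, and the approach as sketched does not actually establish the Proposition.

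The central difficulty, which you acknowledge but do not resolve, is that after applying the triangle inequality $\bigl|\sum_{\chi\neq\chi_0}(\sum_p\chi(p))^k\bigr|\leq\sum_{\chi\neq\chi_0}\bigl|\sum_p\chi(p)\bigr|^k$ and invoking the moment identity for the \emph{full} modulus $m=q_1\cdots q_\ell$, your diagonal/off-diagonal split cannot produce the error $O(\lambda^\ell P^k/\log y)$ across the entire range $\ell\leq k/2$. You say explicitly that to get this one ``must avoid the triangle inequality at the start and instead keep the bilinear structure \ldots\ running a dispersion/Cauchy--Schwarz argument,'' but you do not carry out any such argument; this is the heart of the Proposition, not a finishing touch. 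The off-diagonal part is also not bounded: counting ordered $\ell$-tuples dividing $D=\prod u_i-\prod v_i\neq 0$, you assert ``there are few admissible $q$-tuples, and summing against $1/m$ keeps this off-diagonal part of the same size,'' but no estimate for the number of such tuples, nor for the sum over the $P^k$ pairs $(\mathbf{u},\mathbf{v})$, is given. (There is also a bookkeeping slip: after the $1/\phi(m)$ cancels against the $\phi(m)$ from the moment identity, the sum over $q$-tuples carries weight $1$, not $1/m$, so your invocation of $\sum_{q_1,\ldots,q_\ell}(q_1\cdots q_\ell)^{-1}=\lambda^\ell$ is off; your stated bound for the diagonal happens to be recoverable via $Q\leq\lambda y/2$ and $\ell\leq k/2$, but that is not the reasoning you give.)

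The missing idea, which the paper uses, is to work with \emph{primitive} characters organized by conductor rather than with the full modulus $m$. Each non-principal $\chi\bmod m$ is induced by a primitive $\widetilde\chi\bmod q$ with $q\mid m$, and one classifies these by the number $t$ of prime factors of $q$. For each fixed $t$, the combinatorial weight from summing over $\ell$-tuples whose product is divisible by $q$ is $\ll(\ell/q)^t\lambda^{\ell-t}$, and then the large sieve inequalities (over primitive characters of conductor $\leq(y/2)^t$) control the moments $\sum_{q\in\mathcal{Q}_t}\sum^\star_{\widetilde\chi}\bigl|\sum_p\widetilde\chi(p)\bigr|^{2t}$ and $\bigl|\cdot\bigr|^{4t}$ efficiently. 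When $t\leq k/4$, the conductor $q\leq(y/2)^t$ is much smaller than the length $y^{k/2}$ of the relevant prime-product sums, so the large sieve gives a power saving of $(\log y)^{-t}$ per term, yielding $O(\lambda^\ell P^k/\log y)$; when $k/4<t\leq\ell$, an interpolation (H\"older) between the two moment bounds gives the secondary error. The key point this captures, and your argument misses, is that most non-principal characters mod $m$ have \emph{small} conductor, and averaging over $q$-tuples with a shared small conductor is where the saving comes from. This conductor-stratified large sieve is what makes the whole range $\ell\leq k/2$ tractable without any dispersion argument.
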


Roughly speaking, Proposition \ref{prop2.1} may be viewed as an average result on the equidistribution of smooth numbers 
in arithmetic progressions.  In this sense, it is related to recent results of Harper \cite{Harper2012} and Drappeau \cite{Drappeau2015} 
which establish strong analogues of the Bombieri--Vinogradov theorem in this context.   For our application to Theorem 1, we are essentially 
interested in the distribution in progressions of integers $n\le x$ that are $(\log x)^{4}$ smooth.  The results of Drappeau would permit 
a larger level of distribution in terms of the moduli of the progressions involved, but they require  a smoothness  of $(\log x)^A$ for a 
suitably large unspecified constant $A$, and therefore are not immediately applicable to our situation.   

\begin{proof}[Proof of Theorem 1]  Put $\ell = \alpha k$ and $k = y^{\beta}/(10 \log y)$, with $0\le \alpha \le 1/2$ and 
$\beta \le 1/3 - \log \log y/\log y$.   With a little calculation using Proposition \ref{prop2.1} we see that if $(1-\alpha)(1-\beta) \ge 1/2$ then  
the error term in the second assertion of the proposition is negligible compared to the main term, and we have 
$$ 
{\mathcal N}(y;k,\ell) = \lambda^\ell P^k \Big( 1+ O\Big( \frac{1}{\log y}\Big)\Big) \ge \frac 12 \lambda^\ell P^k. 
$$

Let ${\mathcal Q}$ denote the set of numbers composed of exactly $\ell$ primes taken from $(y/4,y/2]$ and 
${\mathcal R}$ denote the set of numbers composed of exactly $k$ primes taken from $(y/2,y]$.   We consider 
solutions to the congruence $r\equiv 1 \pmod q$ with $r\in {\mathcal R}$ and $q\in {\mathcal Q}$.   Each solution 
is counted at most $k! \ell!$ times in ${\mathcal N}(y;k,\ell)$, and therefore the number of solutions to this congruence is 
 at least $\frac 12 \lambda^{\ell}P^k/(k! \ell!)$.  For a solution to $r\equiv 1\pmod q$, note that $u=(r-1)/q$ is an integer 
 lying below $y^{k}/(y/4)^{\ell}  = 4^{\ell} y^{k-\ell}$.   It follows that there is a ``popular" integer $u_0$ such that the equation 
 $r=1+qu_0$ has at least 
 $$ 
 \frac 12 \frac{\lambda^{\ell}P^k}{k! \ell!} \frac{1}{4^{\ell} y^{k-\ell}} \gg \Big(\frac{1}{4\ell \log y}\Big)^{\ell} \Big( \frac{y}{k\log y}\Big)^k y^{\ell-k} \gg 10^k y^{-k\beta + (1-\beta)\ell} 
 $$ 
solutions.  If $\alpha (1-\beta) \ge \beta$, then this number of solutions exceeds $10^k$.  

The two constraints $(1-\alpha)(1-\beta) \ge 1/2$ and $\alpha(1-\beta) \ge \beta$ are met by taking $\beta= 1/4$, and $\alpha= 1/3$.  Take 
$S$ to be the set of primes in $(y/4,y]$ together with the prime factors of $u_0$.   Since $u_0$ has at most $\ll (\log u_0)/\log \log u_0 \ll y^{\beta}$ distinct prime factors, 
the set $S$ has size at most $y/\log y$.  Our argument above has produced 
$$ 
\gg 10^k \gg \exp\Big( \frac{y^{\beta}}{5\log y}\Big) \ge \exp\Big( \frac{s^{1/4}}{10 (\log s)^{3/4}}\Big) 
$$ 
solutions to the equation $a+1=c$ with all prime factors of $ac$ lying in $S$.   This establishes the theorem.  
\end{proof}

\section{Proof of Proposition \ref{prop2.1}}  

By the orthogonality relation for Dirichlet characters, we have 
\begin{align}
{\mathcal N}(y;k,\ell) &= \sum_{\substack{y/4 <q_j \le y/2 \\ 1\le j \le \ell }} \frac{1}{\varphi(q_1 \cdots q_{\ell})} \sum_{\chi \pmod {q_1\cdots q_{\ell}}} 
\sum_{\substack{y/2 <p_i \le y \\ 1\le i \le k} } \chi(p_1 \cdots p_k) \nonumber\\ 
&=  \sum_{\substack{y/4 <q_j \le y/2 \\ 1\le j \le \ell }} \frac{1}{\varphi(q_1 \cdots q_{\ell})} \sum_{\chi \pmod {q_1\cdots q_{\ell}}} \Big( \sum_{y/2 < p \le y}\chi(p)\Big)^k. 
\label{3.1} 
\end{align}
We isolate the contribution of the principal character $\chi = \chi_0$ above.  Since $\varphi(q_1 \cdots q_\ell) = q_1 \cdots q_\ell (1+O(\ell/y))$, this term contributes 
\begin{equation} 
\label{3.2}
 \Big(1 + O\Big(\frac{\ell}{y}\Big)\Big) \sum_{\substack{y/4 <q_j \le y/2 \\ 1\le j \le \ell }} \frac{1}{q_1 \cdots q_{\ell}} \Big(\sum_{y/2 < p \le y} 1\Big)^k 
 = \Big( 1+ O\Big( \frac{\ell}{y}\Big) \Big) \lambda^\ell P^k. 
 \end{equation} 
 It remains now to estimate the contribution of the non-principal characters to (\ref{3.1}), which is bounded by
 \begin{equation} 
 \label{3.3} 
 \le \sum_{\substack{y/4 <q_j \le y/2 \\ 1\le j \le \ell }} \frac{2}{q_1 \cdots q_{\ell}} \sum_{\substack{\chi \pmod {q_1\cdots q_{\ell} } \\ \chi \neq \chi_0}}
  \Big| \sum_{y/2 < p \le y}\chi(p)\Big|^k.
 \end{equation} 
 
 To estimate the contribution of the non-principal characters, we shall use the large sieve.  Since the large sieve gives a bound for sums over 
 primitive characters, we first transform (\ref{3.3}) into a sum over primitive characters.  Recall that each non-principal character $\chi \pmod{q_1 \cdots q_{\ell}}$ 
 is induced by some primitive character $\widetilde{\chi} \pmod{q}$ where $q>1$ is a divisor of $q_1\cdots q_{\ell}$.   For integers $1\le t\le \ell$ define 
 ${\mathcal Q}_t$ to be the set of moduli $q$ that are composed of exactly $t$ primes (not necessarily distinct) all taken from the interval $(y/4,y/2]$.   Thus 
 the sum in (\ref{3.3}) may be recast as 
 $$ 
 \sum_{t=1}^{\ell} \ \   \sum_{q \in {\mathcal Q}_t} \ \ \sideset{}{^{\star}} \sum_{\substack{\widetilde{\chi} \pmod q }} \ \ \ 
 \Big( \sum_{ \substack {y/4 <q_j \le y/2 \\ 1\le j \le \ell \\ q| q_1\cdots q_\ell} } \frac{2}{q_1 \cdots q_{\ell} } \Big)  \Big| \sum_{y/2<p\le y} \widetilde{\chi}(p) \Big|^k.
 $$ 
  Here the $\star$ indicates that the sum is over primitive characters, and we used that $\chi(p) = \widetilde{\chi}(p)$ for $y/2 < p\le y$.   Given 
  $q\in {\mathcal Q}_t$ note that 
  $$
\sum_{ \substack {y/4 <q_j \le y/2 \\ 1\le j \le \ell \\ q| q_1\cdots q_\ell} } \frac{2}{q_1 \cdots q_{\ell} }  \le \frac{2}{q} \binom{\ell}{t} t! \Big(\sum_{y/4 < p \le y/2}\frac 1p\Big)^{\ell-t} 
= \frac{2}{q} \frac{\ell!}{(\ell-t)!} \lambda^{\ell-t},
  $$ 
since we must pick $t$ out of $q_1$, $\ldots$, $q_{\ell}$ to be the $t$ prime factors of $q$, and these $t$ prime factors may be permuted 
in at most $t!$ ways.   Since $\ell!/(\ell-t)! \le \ell^t$ and $q\ge (y/4)^t$ for $q\in {\mathcal Q}_t$, we conclude that the quantity in (\ref{3.3}) may be bounded by 
\begin{equation} 
\label{3.4} 
\ll \sum_{t=1}^{\ell} \Big( \frac{4\ell}{y}\Big)^t \lambda^{\ell -t} \sum_{q\in {\mathcal Q}_t} \ \ \ 
\sideset{}{^{\star}} \sum_{\substack{\widetilde{\chi} \pmod q }} \ \ \  \Big|\sum_{y/2 <p\le y} 
\widetilde{\chi}(p) \Big|^k. 
\end{equation} 

We are now ready to apply the large sieve, which we now recall. 

\begin{lemma} 
\label{lem:large.sieve} For any sequence $a_{n}$ of complex numbers, we
have
\begin{equation}
\label{3.5} 
\sum_{\chi\pmod q}\Big|\sum_{n\leq N}a_{n}\chi(n)\Big|  \leq(N+q)\sum_{n\leq N}|a_{n}|^{2}, 
\end{equation}
and 
\begin{equation} 
\label{3.6} 
\sum_{q\leq Q}\frac{q}{\varphi(q)} \ \ \sideset{}{^{\star}}\sum_{\chi\pmod q}\Big|\sum_{n\leq N}a_{n}\chi(n)\Big|^{2}\leq\left(N+Q^{2}-1\right)\sum_{n\leq N}\left|a_{n}\right|^{2}. 
\end{equation}
\end{lemma}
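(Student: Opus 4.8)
The plan is to prove the two inequalities separately. The first, \eqref{3.5}, is elementary and follows from the orthogonality relations for Dirichlet characters. The second, \eqref{3.6} --- the multiplicative large sieve with the $q/\varphi(q)$ weight --- I would reduce to the classical \emph{additive} large sieve inequality by means of the Gauss sum identity for primitive characters and the Farey dissection of the unit interval; proving the additive inequality with the sharp constant is the one genuinely substantial step.

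For \eqref{3.5} I would expand the modulus squared and apply the relation $\sum_{\chi\pmod q}\chi(m)\overline{\chi(n)}=\varphi(q)$ when $m\equiv n\pmod q$ and $(mn,q)=1$, and $=0$ otherwise. The left side then equals $\varphi(q)\sum_{b}\big|\sum_{n\equiv b}a_n\big|^2$, where $b$ runs over the reduced residues modulo $q$ and the inner sum is over $n\le N$ with $n\equiv b\pmod q$. Each such residue class has at most $N/q+1$ members in $[1,N]$, so Cauchy--Schwarz gives $\big|\sum_{n\equiv b}a_n\big|^2\le(N/q+1)\sum_{n\equiv b}|a_n|^2$; summing over $b$ and using $\varphi(q)\le q$ yields the bound $(N+q)\sum_{n\le N}|a_n|^2$.

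For \eqref{3.6} the key input is the Gauss sum identity $\sum_{a\pmod q}\chi(a)e(an/q)=\overline{\chi(n)}\,\tau(\chi)$, valid for \emph{all} integers $n$ when $\chi$ is primitive modulo $q$, together with $|\tau(\chi)|^2=q$. Multiplying through by $\overline{a_n}$, summing over $n\le N$, and taking absolute values gives $\sqrt q\,\big|\sum_{n\le N}a_n\chi(n)\big|=\big|\sum_{(a,q)=1}\chi(a)T(a)\big|$, where $T(a)=\sum_{n\le N}\overline{a_n}\,e(an/q)$. I would then square, sum over primitive characters modulo $q$, enlarge the sum to all characters modulo $q$, and invoke orthogonality once more to collapse the right-hand side to $\varphi(q)\sum_{(a,q)=1}|T(a)|^2$; this produces $\frac{q}{\varphi(q)}\sideset{}{^{\star}}\sum_{\chi\pmod q}\big|\sum_{n\le N}a_n\chi(n)\big|^2\le\sum_{(a,q)=1}|T(a)|^2$. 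Summing over $q\le Q$, the points $a/q$ appearing on the right (with $(a,q)=1$ and $0\le a<q$) are precisely the Farey fractions of order $Q$ in $[0,1)$, hence spaced by at least $Q^{-2}$ modulo $1$.

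The remaining, and only substantial, ingredient is the additive large sieve: for real points $x_r$ that are $\delta$-separated modulo $1$, one has $\sum_r\big|\sum_{n\le N}b_n e(nx_r)\big|^2\le(N+\delta^{-1}-1)\sum_{n\le N}|b_n|^2$. Applying this with $b_n=\overline{a_n}$, with $\{x_r\}$ the Farey fractions above, and with $\delta^{-1}\le Q^2$, and combining with the previous paragraph, completes the deduction of \eqref{3.6}. Establishing the additive inequality with the \emph{sharp} constant $N+\delta^{-1}-1$ is the hard part: the cleanest arguments are the Montgomery--Vaughan duality method and Selberg's approach via the Beurling extremal majorant of the indicator function of an interval. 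For the later applications of the lemma in this paper any of the standard weaker forms (for instance with constant $\pi N+\delta^{-1}$, obtained by Gallagher's short argument) would amply suffice, so one could instead simply cite a textbook.
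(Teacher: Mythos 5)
Your proposal is correct and follows exactly the standard textbook route that the paper implicitly invokes: orthogonality plus Cauchy--Schwarz for \eqref{3.5}, and the Gauss-sum reduction of the primitive-character sum to the additive large sieve over Farey fractions for \eqref{3.6}, which is precisely the proof of Theorem~7.13 in Iwaniec--Kowalski that the paper cites. (You also implicitly fix the typo in \eqref{3.5} as printed, where the exponent $2$ on $\big|\sum_{n\le N}a_n\chi(n)\big|$ has been dropped; your argument correctly treats the squared modulus, which is what the paper actually uses later in \eqref{3.7}.)
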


\begin{proof}  Estimate \eqref{3.5} follows from the orthogonality of Dirichlet characters, while \eqref{3.6} may be found,
for example, in Theorem 7.13 of \cite{IwKo}.
\end{proof}

From the large sieve we extract two bounds related to the quantity \eqref{3.4}: namely, 
\begin{equation} 
\label{3.7} 
\sum_{ q\in {\mathcal Q}_t}      \ \ \ 
\sideset{}{^{\star}} \sum_{\substack{\widetilde{\chi} \pmod q }} \ \ \  \Big|\sum_{y/2 <p\le y} 
\widetilde{\chi}(p) \Big|^{2t} \ll  y^t P^{2t}, 
\end{equation} 
and 
\begin{equation} 
\label{3.8} 
\sum_{ q\in {\mathcal Q}_t}      \ \ \ 
\sideset{}{^{\star}} \sum_{\substack{\widetilde{\chi} \pmod q }} \ \ \  \Big|\sum_{y/2 <p\le y} 
\widetilde{\chi}(p) \Big|^{4t} \ll  (tyP)^{2t}. 
\end{equation} 

Consider first the estimate \eqref{3.7}.   Write $(\sum_{y/2 < p \le y} \widetilde{\chi}(p))^t = \sum_{n\le y^{t}} a_t(n) \widetilde{\chi}(n)$, 
where $a_t(n)$ denotes the number of ways of writing $n$ as a product of $t$ primes all from the interval $(y/2,y]$.   Clearly 
$a_t(n) \le t!$ and $\sum_n a_t(n) = P^t$.   Therefore, using the large sieve estimate \eqref{3.5} we find 
$$ 
\sum_{ q\in {\mathcal Q}_t}      \ \ \ 
\sideset{}{^{\star}} \sum_{\substack{\widetilde{\chi} \pmod q }} \ \ \  \Big|\sum_{y/2 <p\le y} 
\widetilde{\chi}(p) \Big|^{2t} \ll |{\mathcal Q}_t| y^t \sum_{n\le y^t} a_t(n)^2 \le |{\mathcal Q}_t| y^t t! P^t.
$$ 
Since $t\le \ell \le y^{1/3}$ it is easy to check that $|{\mathcal Q}_t| \le P^t/t!$ for large $y$, and therefore 
\eqref{3.7} follows.  

 The proof of \eqref{3.8} is similar, invoking now the large sieve estimate \eqref{3.6}.  With $a_{2t}(n)$ defined 
 similarly as above, \eqref{3.6} yields 
 $$
 \sum_{ q\in {\mathcal Q}_t}      \ \ \ 
\sideset{}{^{\star}} \sum_{\substack{\widetilde{\chi} \pmod q }} \ \ \  \Big|\sum_{y/2 <p\le y} 
\widetilde{\chi}(p) \Big|^{4t} \ll  y^{2t} \sum_{n\le y^{2t}} a_{2t}(n)^2  \le y^{2t} (2t)! P^{2t}, 
$$ 
from which \eqref{3.8} follows.

 If $k\ge 4t$ then from \eqref{3.8} and the trivial bound $|\sum_{y/2 < p \le y} \widetilde{\chi}(p)| \le P$ we get 
 $$ 
 \Big( \frac{4\ell}{y} \Big)^t \lambda^{\ell-t}  \sum_{ q\in {\mathcal Q}_t}      \ \ \ 
\sideset{}{^{\star}} \sum_{\substack{\widetilde{\chi} \pmod q }} \ \ \  \Big|\sum_{y/2 <p\le y} 
\widetilde{\chi}(p) \Big|^{k} \ll \Big(\frac{4\ell}{y}\Big)^t \lambda^{\ell-t} P^{k-4t} (tyP)^{2t} = P^k \lambda^{\ell} \Big( \frac{4\ell t^2 y}{\lambda P^2}\Big)^t.
$$ 
Since we are assuming that $\ell \le k \le y^{1/3}/(\log y)^2$, we may conclude that  
\begin{equation}
\label{3.9} 
\sum_{1\le t\le k/4} \Big( \frac{4\ell}{y} \Big)^t \lambda^{\ell-t}  \sum_{ q\in {\mathcal Q}_t}      \ \ \ 
\sideset{}{^{\star}} \sum_{\substack{\widetilde{\chi} \pmod q }} \ \ \  \Big|\sum_{y/2 <p\le y} 
\widetilde{\chi}(p) \Big|^{k} \ll \sum_{1\le t\le k/4} P^k \lambda^\ell (\log y)^{-t} \ll \frac{P^{k} \lambda^{\ell}}{\log y}. 
\end{equation}

Now suppose $k/4\leq t\leq k/2$.   Interpolating between \eqref{3.7} and \eqref{3.8} using H{\" o}lder's inequality we obtain 
$$ 
 \sum_{ q\in {\mathcal Q}_t}      \ \ \ 
\sideset{}{^{\star}} \sum_{\substack{\widetilde{\chi} \pmod q }} \ \ \  \Big|\sum_{y/2 <p\le y} 
\widetilde{\chi}(p) \Big|^{k} \ll  \big( y^t P^{2t} \big)^{\frac{4t-k}{2t}} \big( (tyP)^{2t} \big)^{\frac{k-2t}{2t}} = t^{k-2t} P^{2t} y^{k/2}.
$$ 
Therefore, for $\ell \le k/2$, 
\begin{align*}
\sum_{k/4<t\le \ell} \Big(\frac 4y\Big)^t \lambda^{\ell -t}  \sum_{ q\in {\mathcal Q}_t}      \ \ \ 
\sideset{}{^{\star}} \sum_{\substack{\widetilde{\chi} \pmod q }} \ \ \  \Big|\sum_{y/2 <p\le y} 
\widetilde{\chi}(p) \Big|^{k} & \ll   \sum_{k/4<t\le \ell} \ell^k \lambda^\ell y^{k/2} \Big( \frac{4P^2}{\lambda \ell y} \Big)^t 
\\
&\ll \ell^k \lambda^\ell y^{k/2} \sum_{k/4 <t \le \ell} \Big( \frac{4P}{\ell}\Big)^t.
\end{align*}
The right side above is dominated by the term $t=\ell$, and so we conclude that 
\begin{equation} 
\label{3.10} 
\sum_{k/4<t\le \ell} \Big(\frac 4y\Big)^t \lambda^{\ell -t}  \sum_{ q\in {\mathcal Q}_t}      \ \ \ 
\sideset{}{^{\star}} \sum_{\substack{\widetilde{\chi} \pmod q }} \ \ \  \Big|\sum_{y/2 <p\le y} 
\widetilde{\chi}(p) \Big|^{k}  \ll \ell^{k-\ell} (4\lambda P)^{\ell} y^{k/2}. 
\end{equation} 
 
 The estimates \eqref{3.9} and \eqref{3.10} complete the proof of the proposition. 

\bibliographystyle{plain}
\bibliography{harper}

\end{document}